\newcommand{\ch}{\mbox {\bf 1}}
\newcommand{\Z}{{\mathbb{Z}}} 
\newcommand{\N}{{\mathbb{N}}}
\newcommand{\ol}[1]{\overline{#1}}
\newcommand{\be}{\begin{equation}}
\newcommand{\ee}{\end{equation}}
\newtheorem{theorem}{Theorem}  
\newtheorem{prop}[theorem]{Proposition}  
\newtheorem{lemma}[theorem]{Lemma}
\newtheorem{corollary}[theorem]{Corollary}
\numberwithin{equation}{section}
\title{Efficient Coupling for Random Walk with Redistribution}
\author{Iddo Ben-Ari\footnote{Corresponding author. \url{iddo.ben-ari@uconn.edu}.}
\footnote{ Partially supported by NSA grant H98230-12-1-0225 and by grant  282912 from the Simons Foundation.}
 \and Hugo Panzo\footnote{Partially supported by NSA grant H98230-14-1-0134 to Richard Bass.} \and Elizabeth Tripp\footnote{Partially supported by UConn SURF Trimble Family Award.}}
\date{}
\begin{document}
\maketitle
\begin{abstract}
What can one say on convergence to stationarity of a  finite state Markov chain that behaves ``locally" like a nearest neighbor random walk on $\Z$ ? 
The model we consider is a version of nearest neighbor lazy random walk on the  state space $ \{0,\dots,N\}$: the  probability for staying put at each site is $\frac 12$,  the transition to the nearest neighbors, one on the right and one on the left, occurs with probability $\frac14$ each, where we identify two sites, $J_0$ and $J_N$ as, respectively, the neighbor of $0$ from the left and the neighbor of $N$ from the right (but $0$ is not a neighbor of $J_0$ and $N$ is not neighbor of $J_N$). This model is a discrete version of diffusion with redistribution on an interval studied by several authors in the recent past, and for  which the exponential rates of convergence to stationarity was computed analytically, but had no intuitive or probabilistic interpretation, except for case where the jumps from the endpoints are identical (or more generally have the same distribution).  We study convergence to stationarity probabilistically, by finding an efficient coupling. The coupling identifies  the ``bottlenecks" responsible for the rates of convergence and also gives tight computable bounds on the total variation norm of the process between two starting points.  The adaptation to the diffusion case is straightforward.
\end{abstract}
\section{Introduction}
The goal of this paper is to construct efficient coupling for a  discrete version of one-dimensional diffusion with redistribution on an interval, a special case of  model which was studied independently by several groups of authors (\cite{GK} \cite{BAP} \cite{KW}\cite{KW2} \cite{LLR} \cite{LP}\cite{BAd}). Our coupling gives a probabilistic explanation to the rates of convergence to stationarity for the model and partially answering an open problem posed in \cite{KW}.  The fact that the coupling does capture the rate of convergence is nontrivial. \\  

Grigorescu and Kang \cite{GK} first considered a model they called Brownian Motion on the Figure Eight. The model considered was Brownian motion on an interval $(0,1)$, which upon hitting the boundary, starts afresh at a point $a\in (0,1)$. It was shown that the model converges exponentially fast to stationarity with the convergence rate coinciding with the second eigenvalue of the Dirichlet Laplacian $-\frac 12 \frac{d^2}{dx^2}$ on $(0,1)$. The model was generalized to higher dimension \cite{GK2},  more general diffusion and boundary behavior in \cite{BAP}\ \cite{BAP2}\cite{LP}. However, the one-dimensional model with BM as the underlying diffusion (possibly with constant drift), but more general boundary behavior than BM on the Figure Eight has attracted  some attention because despite its apparent simplicity (and obvious regeneration structure), it exhibits interesting and nontrivial behavior \cite{LLR} \cite{KW}\cite{KW2}\cite{BAd}.  By more general boundary behavior we mean that upon hitting the boundary, the process starts afresh in the domain, but with an initial distribution $\nu_-$ if exiting from the left, and $\nu_+$ if exiting from the right. Of course, this mechanism is repeated indefinitely.  It was shown in \cite{LLR} that if $\nu_-=\nu_+$ then the rate of convergence is the second eigenvalue of the Dirichlet Laplacian, and that the  first Dirichlet eigenvalue is an (unattainable) infimum of rates of convergence over all choices of $\nu_-$ and $\nu_+$. In an unpublished work due to the tragic death of Wenbo Li,  it was  shown that the third Dirchlet eigenvalue is the maximal rate of convergence. All three results were obtained by Fourier analysis, and did not provide any insight on the probabilistic mechanism that governs the rate of convergence. Kolb and Wubker \cite{KW} obtained an efficient coupling for the case $\nu_-=\nu_+$, giving a beautiful and intuitive explanation to the rate of convergence, utilizing the fact that $\nu_-=\nu_+$ allows to guarantee coupling  once both copies are redistributed at the same time. This principle does not hold when $\nu_-\ne \nu_+$, a problem left open in \cite{KW}, and is the  main motivation for the present work. Although our main interest is in this latter case, we also provide our version of  the coupling in \cite{KW} to the discrete setting, as this leads to more questions, and completes the picture for the discrete setting. \\ 

We now describe our model. Our process builds from lazy random walk on the state space ${\cal S}_N=\{0,\dots,N\}$ for some $N>2$. Let $\nu_0$ be a probability distribution on $\{2,\dots,N\}$ and let $\nu_{N}$ be a probability distribution on $\{0,\dots,N-2\}$. Slightly abusing notation, we consider  $\nu_0,\nu_N$ also as the probability mass functions with domain $\Z$, through the identification  $\nu_x (z) = \nu_x (\{z\})$.  Consider the transition function $p$ on the state space given by: 
\be 
\label{eq:pdefn}
p(x,y) =   \begin{cases} \frac 12 & x=y\\  \frac 14 & |x-y|=1 \\ \frac 14 \nu_x (y) & x\in \{0,N\},|y-x|>1.
\end{cases} 
\ee 

As is easy to see, the model is never reversible.  In the sequel, we will always make the following additional assumptions, which will simplify our arguments, and allow us to focus on ideas, and less on parity-related technicalities (which are still unavoidable, but are more manageable).

\be 
\label{eq:parity_check}
\begin{aligned} 
&N \in 4\N\mbox{, and }\nu_0,\nu_N\mbox{ are both supported on  }\{3,5,\dots, N-3\}.
\end{aligned} 
\ee

As we wish to  construct a coupling and consequent bounds which are uniform under scaling as $N\to\infty$,  these assumptions pose no restriction. Any probability distribution on $(0,1)$ is a weak limit of scalings of $\nu_0$ and $\nu_N$ as above.\\

Let $X=(X_n:n\in \Z_+)$ denote the canonical process on $\Z$, and for $x \in {\cal S}_N$, let $P_x$  be the distribution  under which $X$ is a Markov chain with transition function $p$ as in \eqref{eq:pdefn}, with  initial distribution $X_0=x$. Below,  even if the initial distribution is not specified, we will assume that $X$ is a Markov chain with transition function $p$. It is easy to see that $X$ is aperiodic and irreducible. Hence it is ergodic, and in particular, it follows that it converges exponentially fast in total variation to its unique stationary distribution $\pi$. To make this more precise, for $x,y\in {\cal S}_N$, let 

\be d_t (x,y)= \max_{A \subset {\cal S}_N}  \left (  P_x (X_t \in A) - P_y (X_t \in A)\right) 
\ee 
 denote the total variation distance between $P_x ( X_t \in \cdot)$ and $P_y (X_t \in \cdot)$, and let  $d_t = \sup_{x,y} d_t(x,y)$. Then the exponential ergodicity is the statement that there exists $\lambda \in (0,1)$ such that 

\be  
\lim_{t\to \infty}  \frac 1t \ln d_t = \ln \lambda. 
\ee 

We now recall the notion of a coupling and of an efficient coupling. A  coupling for $p$ is an ${\cal S}_N^2$-valued process  $(X,Y) = \{(X_n,Y_n):n\in \Z_+\}$ such that both marginal processes $X=(X_n:n\in \Z_+)$ and $Y=(Y_n\in \Z_+)$ are Markov chains with transition function $p$. As usual, we will only consider coalescing couplings:  If $X_n = Y_n$ for some $n\in\Z_+$, then $X_k = Y_k $ for all $k\ge n$.  A coupling is Markovian if $(X,Y),X,Y $ are all Markov chains with respect to the filtration generated by $(X,Y)$. In particular, this implies  that  for  all $n\in\Z_+,~x,y,x',y' \in {\cal S}_N$,  
$$P(X_{n+1} = x' | (X_n,Y_n)=(x,y) ) = p(x,x')\mbox{ and } P(Y_{n+1} = y' | (X_n,Y_n) = (x,y)) = p(y,y').$$ 

  Given a coupling $(X,Y)$, we will write $P_{x,y}$ for the law of $(X,Y)$ with $X_0=x,Y_0=y$. Also, the coupling time $\tau$ is a stopping time defined  as 
\be \tau = \inf\{ t: X_t = Y_t\}\ee 

As is well known, 
\be d_t(x,y)  \le  P_{x,y} (\tau >t ). \ee 
Note that the righthand side depends on the specifics of the coupling. We say that a coupling is efficient if  

\be \frac 1t \ln P_{x,y} (\tau>t) \sim \frac 1t \ln d_t(x,y),\ee
as $t\to\infty$, where $f(t)\sim g(t)$ means that the ratio $f(t)/g(t)\to 1$ as $t\to\infty$. We comment that this definition is slightly weaker than the standard definition in \cite{BK}, as we require asymptotic equivalence at the logarithmic scale. The reason for that is that in some cases, the coupling constructed has a polynomial correction to the exponential decay, an effect which vanishes at the logarithmic scale.\\

We close this section with two comments: 

\begin{enumerate} 
\item As is well, known (\cite[Corollary 2.8]{BK}) efficient coupling for the similar nearest neighbor processes on  $\{0,\dots,N\}$ (birth and death processes) is trivial as any coupling (which does not allow crossings from $(x,x+1)$ to $(x+1,x)$) is efficient, do the the existence of monotone functions which separate the two copies up to the coupling time, due to the linear order of the set. Without this linear order, even in the next simplest case of nearest neighbor reversible Markov chains on the circle efficient couplings do not always exists  (although interestingly enough, an efficient coupling implies the existence of a nearly monotone function for many chains on the circle) \cite{MountCran}. 

\item As the adaptation our results from Section \ref{sec:deterministic} to the diffusion setting is straightforward and we will omit the details. 
\end{enumerate} 

\section{Our Results}
\subsection{Assumptions and Preparation} 

In order to make the main argument simpler and more  visible, we will make a reduction to a smaller set of initial distributions. To this end, we need some definitions. Let \be \rho = 2 \lfloor \frac { \min \{x,N-x : \nu_0(x)+\nu_N(x)>0\}}{2}\rfloor.\ee
 That is, $\rho$ is the largest  even number less than or equal to the distance of the  union of the support of  $\nu_0$ and $\nu_N$  to $\{0,N\}$. Observe that by our assumption \eqref{eq:parity_check},  $\rho \ge 2$. Let 

\be \tilde d_t = \sup_{y-x \in 2\N, y-x \le \rho }  d_t (x,y).\ee 
Then we have the following simple proposition: 

\begin{prop}
\label{pr:triangle} 
\be  \tilde d_t \le d_t \le \lfloor 1+ N/\rho\rfloor (\tilde d_t + \tilde d_{t-1}).\ee
\end{prop}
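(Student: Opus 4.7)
The left inequality is immediate since $\tilde d_t$ is a supremum over a subset of the pairs appearing in the definition of $d_t$. For the right inequality, my plan is to decompose $d_t(x,y)$ via the triangle inequality for total variation into a chain of elementary pieces, each of one of two types: (i) gaps of even size at most $\rho$, bounded by $\tilde d_t$ directly from the definition, or (ii) gaps of size $1$ between two interior points, which I will bound by $\tilde d_{t-1}$ through a one-step Markov computation.

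The key auxiliary lemma is: if $x,x+1\in\{1,\ldots,N-1\}$, then $d_t(x,x+1)\le\tilde d_{t-1}$. To prove it I would expand
\[
P_x(X_t\in A)=\tfrac12 P_x(X_{t-1}\in A)+\tfrac14 P_{x-1}(X_{t-1}\in A)+\tfrac14 P_{x+1}(X_{t-1}\in A),
\]
and the analogous expression starting at $x+1$, subtract, and collect terms to obtain
\[
P_x(X_t\in A)-P_{x+1}(X_t\in A)=\tfrac14\bigl(P_{x-1}-P_{x+1}\bigr)(X_{t-1}\in A)+\tfrac14\bigl(P_x-P_{x+2}\bigr)(X_{t-1}\in A).
\]
Both gaps on the right are even and of size $2$, hence at most $\rho$ (using $\rho\ge 2$), so taking $\sup_A$ yields $d_t(x,x+1)\le\tfrac12\tilde d_{t-1}\le\tilde d_{t-1}$.

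With this in hand, for general $x<y$ in $\{0,\ldots,N\}$ I would construct an explicit chain $x=z_0,z_1,\ldots,z_k=y$ whose consecutive pieces are all of type~(i) except for at most one of type~(ii). When $y-x$ is even, chunking by $\rho$ gives $k\le\lceil(y-x)/\rho\rceil\le\lfloor 1+N/\rho\rfloor$ type-(i) pieces and the bound follows immediately. When $y-x$ is odd, I insert one type-(ii) jump at an interior pair---at $(x,x+1)$ when $1\le x\le N-2$, at $(y-1,y)$ when $x=0$ and $y\ge 3$, or at $(x,x+1)$ when $y=N$ and $x\le N-3$---and cover the remaining even distance with type-(i) pieces numbering at most $\lceil(N-2)/\rho\rceil\le\lfloor 1+N/\rho\rfloor$. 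Summing yields $d_t(x,y)\le\lfloor 1+N/\rho\rfloor\tilde d_t+\tilde d_{t-1}\le\lfloor 1+N/\rho\rfloor(\tilde d_t+\tilde d_{t-1})$.

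The main obstacle will be the two exceptional boundary pairs $(0,1)$ and $(N-1,N)$, for which the odd-parity construction would force the size-$1$ jump to sit at the boundary, where the Markov decomposition does not close cleanly because the transition from $0$ (resp.\ $N$) is contaminated by the distant support of $\nu_0$ (resp.\ $\nu_N$). I would handle these by a one-step detour through an interior intermediate: $d_t(0,1)\le d_t(0,2)+d_t(1,2)\le\tilde d_t+\tilde d_{t-1}$, using (i) for the even gap $(0,2)$ and the auxiliary lemma for the interior size-$1$ gap $(1,2)$ (interior thanks to $N\ge 4$), and symmetrically at the right endpoint. Since $\lfloor 1+N/\rho\rfloor\ge 1$, these boundary bounds also fit in the claimed inequality, completing the proof.
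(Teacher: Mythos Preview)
Your argument is correct and actually slightly sharper than the paper's. The essential difference lies in how the size-$1$ gap is handled. The paper treats $d_t(x_n-1,x_n)$ uniformly (including at the boundary) by constructing a one-step coupling: one copy moves, the other stays put, so after one step the gap is always even---but possibly as large as $N-2$ if a redistribution occurs. This forces them to invoke the full even-gap bound again, yielding $d_t(x_n-1,x_n)\le\lfloor 1+N/\rho\rfloor\,\tilde d_{t-1}$. You instead restrict the size-$1$ step to interior pairs and use the explicit one-step Markov decomposition, which produces only gaps of size exactly $2$ and hence the cleaner bound $d_t(x,x+1)\le\tfrac12\tilde d_{t-1}$; the price is a short separate treatment of the two boundary pairs $(0,1)$ and $(N-1,N)$ via a detour through $2$ (resp.\ $N-2$). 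Your route avoids any coupling construction in this proposition and gives $d_t\le\lfloor 1+N/\rho\rfloor\tilde d_t+\tilde d_{t-1}$ rather than the paper's $\lfloor 1+N/\rho\rfloor(\tilde d_t+\tilde d_{t-1})$, a harmless improvement. One cosmetic point: your case split in the odd-distance discussion is slightly redundant (the case $y=N$, $x\le N-3$ is already covered by $1\le x\le N-2$), but this does not affect correctness.
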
 

\begin{proof}
The first inequality is trivial. We turn to the second. \\

From the triangle inequality,  for any  $x,y$ we have 

\be
\label{eq:triangle} 
d_t(x,y) \le d_t (x_0,x_1) + \dots + d_t (x_{n-1},x_n),
\ee 

whenever  $x=x_0< x_1< \dots < x_n = y$. Note that  $y-x = m \rho + b$ for unique pair $(m,b)$ with  $m \in \Z_+ $ and $0\le b < \rho$. 
We set the first (possibly empty set of) differences $x_{j+1}-x_j ,~j<m$ each to $\rho$. If $y-x$ is even we let $n=m$ or $n=m+1$ according to whether $b=0$ or not. In the latter case we let $x_{m+1}-x_m = b$. When $y-x$ is odd, we do as follows. If $b=1$ then set $n=m+1$ and let $x_n-x_{n-1} = 1$. Otherwise, let $n=m+2$ and set $x_{m+1}-x_m =b-1$ and $x_{m+2}-x_m=1$. 

If $y-x$ is even, then we obviously have

\be \label{eq:even_dist} d_t (x,y) \le \lfloor 1+ N/\rho  \rfloor \tilde d_t. \ee 

and when $y-x$ is odd, we have 

\be
\label{eq:new_triangle} 
 d_t (x,y)  \le \lfloor 1+N/\rho \rfloor \tilde d_t + d_t(x_n-1,x_n). 
 \ee
 
It remains to find an upper bound for $d_t(x_n-1,x_n)$. Choose $A$ such that 

\be d_t (x_n-1,x_n) = P_{x_n-1} (X_t \in A) - P_{x_n} (X_t \in A).\ee

We construct a coupling $(X,Y)$ starting from $(x_n-1,x_n)$ as follows.  Let $L$ and $R$ be $\nu_0$ and $\nu_N$ distributed random variables, respectively. We will assume that $L$ and $R$ are independent. For the first step we toss two independent fair coins, independent of $L$ and $R$.  If the first lands $H$, then $X$ moves and $Y$ stays put. Otherwise, $Y$ moves and $X$ stays put. If the second lands $H$ then we move the copy we chose one step to the right, meaning redistribution to $R$ if it's $Y$ and $Y$ is at $N$. If it lands $T$ then we move one step to the left, meaning redistribution to $L$  if it is $X$ and $X$ is at $0$. After this first step, both copies continue to evolve independently. Note that in any case, exactly one copy moves. If the copy moved is not redistributed then $Y_1-X_1 \in \{0,2\}$. If $X$ is redistributed from $0$ to $L$,  then $x_n=1$ and so  $X_1 -Y_1 = L-1$, so that the distance after one step is even. Similarly, if $Y$ is redistributed form $N$, then $x_n = N$, so that $X_1-Y_1 = N-1- R$, which is again even. \\

By the Markov property, 
\be d_t (x_n-1,x_n) = E_{(x_n-1,x_n)} \left (  \ch_A (X_t) - \ch_A (Y_t) \right) = E_{(x_n-1,x_n)} E_{(X_1,Y_1)}( \ch_A (X_{t-1})-\ch_A (Y_{t-1}) ).\ee

However, by the argument above and the triangle inequality, it follows from \eqref{eq:even_dist} that 

\be E_{(X_1,Y_1)}( \ch_A (X_{t-1})-\ch_A (Y_{t-1}) )\le \lfloor 1 + N/\rho \rfloor \tilde d_{t-1}.\ee

So that $d_t(x_n-1,x_n) \le \lfloor 1+ N /\rho  \rfloor \tilde d_{t-1}$. Plugging this into \eqref{eq:new_triangle} completes the proof. 

\end{proof} 

Next we recall a well-known classical result that will serve for estimating the coupling time. Suppose that $Z=(Z_n:n\in \Z_+)$ is the  lazy random walk on $\Z$, that is, $Z$ jumps to a neighboring site with probability $\frac 14$, and stays put with probability $\frac 12$. Write $Q_z$ for the distribution of $Z$ starting from $Z_0=z$. Let $T(L)$ denote the exit time of $Z$ from the set $\{1,\dots,L\}$, and let 

\be \lambda (L) = \frac 12 ( \cos ( \frac{\pi}{L+1}) +1). \ee 

 Then we have the following standard  and well-know lemma, whose proof is given in the Appendix. 
 
\begin{lemma}~
\label{lem:couple} 
\begin{enumerate} 
\item There exists a coupling  $(Z,Z')$ such that $Z'$ is lazy random walk starting from $Z_0=\lfloor (L+1)/2\rfloor$ and $T(L)\le T'(L)$, and where $T'$ is the exit time of $Z'$ from $\{1,\dots,L\}$. 
\item For any $z\in\{1,\dots,L\}$, 
\be Q_z (T(L) > t) =\frac{2}{L+1}\cot (\frac{\pi}{2(L+1)}) \sin ( \frac{\pi}{L+1} z) \lambda (L) ^t + \lambda_2(L)^t O(1),\ee
where $|\lambda_2(L)|<\lambda(L)$. 
\end{enumerate} 

\end{lemma}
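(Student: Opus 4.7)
I would prove part 2 first via spectral decomposition, then obtain part 1 by establishing stochastic dominance and invoking Strassen's theorem.

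For part 2, I would observe that $Q_z(T(L)>t) = \sum_{y=1}^L P_{\rm res}^t(z,y)$ where $P_{\rm res}$ is the symmetric $L \times L$ matrix $\{p(x,y)\}_{x,y \in \{1,\dots,L\}}$. A direct check with the identity $\sin(a(x-1))+\sin(a(x+1))=2\cos(a)\sin(ax)$ shows that $P_{\rm res}$ is diagonalized by $\phi_k(x)=\sin(k\pi x/(L+1))$, $k=1,\dots,L$, with eigenvalues $\lambda_k(L)=\frac{1}{2}(\cos(k\pi/(L+1))+1)$, and each $\phi_k$ has squared norm $(L+1)/2$. Expanding $\mathbf{1}$ in this orthogonal basis yields
\[
Q_z(T(L)>t) = \frac{2}{L+1}\sum_{k=1}^L \lambda_k(L)^t \sin\!\Bigl(\tfrac{k\pi z}{L+1}\Bigr)\sum_{y=1}^L \sin\!\Bigl(\tfrac{k\pi y}{L+1}\Bigr).
\]
The inner sum is computed via $\sum_{y=1}^L\sin(ay) = \sin(aL/2)\sin(a(L+1)/2)/\sin(a/2)$ with $a = k\pi/(L+1)$; it vanishes for even $k$ since $\sin(k\pi/2)=0$, and simplifies to $\cot(k\pi/(2(L+1)))$ for odd $k$. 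Isolating the $k=1$ term produces the leading-order formula, while the tail is bounded by $O(1)\lambda_2(L)^t$ since $0<\lambda_k(L)<\lambda(L)$ for all $k\ge 2$.

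For part 1, I would reduce to the stochastic dominance $Q_z(T(L)>t)\le Q_m(T(L)>t)$ for all $t\ge 0$ and $z\in\{1,\dots,L\}$, where $m = \lfloor(L+1)/2\rfloor$, and then invoke Strassen's theorem via the quantile coupling (sample one uniform, invert both CDFs, and fill in the trajectories conditionally on the two exit times) to produce $(Z,Z')$. Rather than rely on the sign-indefinite tail of part 2, I would prove the dominance by induction on $t$, showing that $u_t(z):=P_{\rm res}^t\mathbf{1}(z)$, extended by $u_t(0)=u_t(L+1)=0$, is symmetric under $z\mapsto L+1-z$ and weakly unimodal on $\{1,\dots,L\}$ with peak at $m$. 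The base $t=0$ is immediate; symmetry is preserved because $p$ commutes with reflection, and unimodality follows from the telescoping identity
\[
u_{t+1}(z+1)-u_{t+1}(z) = \tfrac{1}{4}\bigl(u_t(z)-u_t(z-1)\bigr)+\tfrac{1}{2}\bigl(u_t(z+1)-u_t(z)\bigr)+\tfrac{1}{4}\bigl(u_t(z+2)-u_t(z+1)\bigr),
\]
which is a non-negative combination of non-negative increments for $z\le m-2$; at the boundary $z=m-1$, either $u_t(m+1)-u_t(m)=-(u_t(m)-u_t(m-1))$ (for odd $L$) or $u_t(m+1)=u_t(m)$ (for even $L$) again reduces the right-hand side to a non-negative combination.

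The main technical fussiness I anticipate is the parity bookkeeping at the peak: the inductive step at $z=m-1$ (and at $z=m$ for even $L$, where the peak is flat over $\{m,m+1\}$) needs a separate verification in each parity, and the sine-sum identity for part 2 must be tracked carefully through its odd/even split. The advantage of routing part 1 through Strassen is that no natural Markov coupling delivers $T(L)\le T'(L)$ pathwise: parallel coupling fails when $Z'$ exits at $L+1$ with $Z$ still interior, and reflection coupling fails symmetrically, so the quantile construction sidesteps this difficulty while still producing the desired ordering.
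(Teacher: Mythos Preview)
Your proof of part 2 is essentially the paper's argument made explicit: both use the spectral decomposition of the symmetric sub-Markov kernel, identify $\sin(\pi x/(L+1))$ as the Perron eigenvector with eigenvalue $\lambda(L)$, and bound the remainder by the next eigenvalue.

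For part 1 your route is genuinely different. The paper constructs a \emph{Markovian} coupling directly: assuming $z<m=\lfloor(L+1)/2\rfloor$, it runs reflection coupling (after a one-step parity fix if $m-z$ is odd) so that $Z+Z'\equiv z+m$ stays constant and $Z<Z'$ until they meet; since $Z'$ reaching $L+1$ would force $Z=z+m-(L+1)<0$, the only possibilities are that $Z$ exits at $0$ first or the copies meet and thereafter move together, giving $T(L)\le T'(L)$ pathwise. Your approach instead proves the stochastic ordering $Q_z(T(L)>t)\le Q_m(T(L)>t)$ by a clean unimodality induction on $u_t=P_{\rm res}^t\mathbf{1}$ and then invokes Strassen. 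Both arguments are correct; the paper's buys an explicit Markov coupling with no external theorem, while yours yields the monotonicity of the survival profile in $z$, which is of independent interest.

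Your closing remark that ``no natural Markov coupling delivers $T(L)\le T'(L)$ pathwise'' and that ``reflection coupling fails symmetrically'' is mistaken, however: the conserved sum $Z+Z'=z+m$ under reflection prevents $Z'$ from exiting at either endpoint before $Z$ does (before they meet), so reflection coupling---after the standard parity correction---is precisely the Markov coupling you thought did not exist.
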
 
In light of the  Lemma \ref{lem:couple}-(1), we abuse notation and write $T(L)$ for the distribution of the exit time of $Z$ from $\{1,\dots,L\}$ starting from $\lfloor (L+1)/2\rfloor$.

\subsection{Deterministic Redistribution} 
\label{sec:deterministic}
In this section we will assume  in addition to \eqref{eq:parity_check} that $\nu_0$ and $\nu_N$ are deterministic. Specificially  

\be \nu_0=\delta_{J_0} \mbox{ and }\nu_N=\delta_{J_N},\mbox{ where }J_0,J_N \in \{3,5,\dots,N-3\}.\ee

Let 

\be 
\label{eq:L0} 
L_0=\frac 12 \max \{J_0-1,N-1-J_N, N+J_N-J_0\}.
\ee

Observe that $L$ is a positive integer, and will serve as the ``effective length" that will determine the exponential tail of the coupling time. Roughly speaking, $L_0$ is the longest interval one copy of the process needs to exit before the two copies meet, a sort of ``bottleneck" for the coupling. Since the coupling is efficient, this actually describes worst case scenario for convergence to stationarity. The geometric meaning of $L_0$ is as follows. Thinking of our state space as consisting of two ``loops" (which is incorrect), one from $0$ to $J_0$ (where we identify $J_0$ with $-1$), and the other from $J_N$ to $N$ (where we identify $J_N$ with $N+1$), then the first two listed elements of the set on the righthand side represent the lengths of the respective loops. The third, divided by $2$,  can be viewed as  distance between the centers of the loops.\\

Observe that the largest distance between the centers increases as $J_0$ decreases and $J_N$ increases, and attains a maximum of $N-2$ for  $J_0=3$, $J_N=N-3$. The minimal distance is $0$,  attained when   $J_0=N-3$ and $J_N=3$. We also observe the following additional bounds for $L_0$. 

\begin{enumerate} 
\item $L_0= \frac{N}{2}$ whenever  $J_0=J_N$  
\item $L_0\le N-3$ (attained when  $J_0=3, J_N=N-3$)
\item $L_0 \ge \frac{2 (N-1)}{3}$ attained when $J_N = \frac{N-1}{3}$. 
\end{enumerate} 

The main result of this section is the following: 

\begin{theorem}
\label{th:upper}
Suppose that $0\le x <y\le N$ and $y-x \in \{2,4,\dots,\rho\}$. Then there exists a  Markovian coupling with $(X_0,Y_0)=(x,y)$ such that the coupling time $\tau$ 
is  dominated by $\lfloor 6+ N /(\rho+1)\rfloor$ independent copies of $T(L_0)$. 
\end{theorem}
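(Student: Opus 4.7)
The plan is to build a Markovian coupling $(X,Y)$ starting from $(x,y)$ whose coupling time $\tau$ decomposes, up to stochastic domination, into a bounded number of phases, each dominated by an independent copy of $T(L_0)$.

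First I would use the \emph{synchronous coupling} while both chains sit in the interior: both read the same triple of coins to make the same stay/left/right move, so that $D_t:=Y_t-X_t$ is preserved. Since $y-x\le\rho$ and \eqref{eq:parity_check} puts the support of $\nu_0,\nu_N$ at distance $>\rho$ from $\{0,N\}$, neither chain triggers a redistribution during this phase, so the joint process is effectively a single lazy random walk whose first exit from the relevant sub-interval can be brought within length $L_0$ after at most an $O(1)$ number of preliminary ``shift'' steps (these preliminary phases are exactly what the additive $6$ in the bound absorbs). By Lemma~\ref{lem:couple}(1) the duration of this synchronous phase is then dominated by $T(L_0)$.

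At the end of the synchronous phase one chain---say $X$---arrives at $0$ while $Y=d\in\{2,4,\dots,\rho\}$ (the case at $N$ is symmetric). The crucial \emph{rendezvous phase} exploits the laziness $p(0,0)=\frac{1}{2}$: I would design the joint transition at the boundary to put maximal allowed mass $\frac{1}{4}$ on the event ``$X$ stays at $0$ and $Y$ steps left,'' subject to the marginal constraints. With positive probability this drives $Y$ from $d$ to $0$ before $X$ leaves $0$; once both sit at $0$, a single joint coin flip sends them simultaneously to $J_0$ with probability $\frac{1}{4}$, producing coalescence. The rendezvous phase is again bounded by an exit time from an interval of length $\le L_0$, hence by another independent copy of $T(L_0)$.

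If the rendezvous fails ($X$ steps to $1$ or redistributes to $J_0$ before $Y$ reaches $0$), I would analyze the resulting configuration, use that $J_0,J_N$ are odd to verify that after at most one corrective synchronous step the pair again has $|Y-X|\in\{2,4,\dots,\rho\}$, and iterate phases (i) and (ii). Using the ``loop'' picture described after~\eqref{eq:L0}, each failed rendezvous either decreases $|Y-X|$ or shifts the pair by at least $\rho+1$ in a fixed direction along the appropriate loop, so a direct counting argument caps the number of failures at $\lfloor N/(\rho+1)\rfloor$. Combined with the $O(1)$ relocation/correction phases, this yields the advertised $\lfloor 6+N/(\rho+1)\rfloor$ copies of $T(L_0)$.

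The hard step is the boundary design in the rendezvous phase together with the failure-branch bookkeeping. The marginal constraints force most of the joint mass to be ``wasted,'' so a single rendezvous succeeds only with probability of constant order; worse, every failed redistribution flips the parity of $D_t$ by the odd amount $J_0$ or $J_N$, and one must carefully pair corrective steps to restore even parity within a bounded horizon. This parity bookkeeping is exactly the place where assumption~\eqref{eq:parity_check} is used in an essential way.
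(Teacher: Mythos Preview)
Your proposal does not match the paper's argument, and more importantly it has a real gap at the very first step. You run the synchronous (rigid) phase until one copy actually reaches the boundary $\{0,N\}$. But with distance $D\le\rho$ fixed, the lower copy then performs a lazy walk on an interval of length $N-D$, so this phase is governed by $T(N-D)$, not by $T(L_0)$. For instance when $J_0=J_N$ (so $L_0=N/2$) and $D$ is small, $N-D\approx N\gg L_0$, and $T(N-D)$ is \emph{not} stochastically dominated by $T(L_0)$; the very first phase already blows the bound you are aiming for. No ``$O(1)$ preliminary shifts'' can fix this, since rigid coupling preserves $D$ and hence the length of the interval the lower copy must traverse.

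The paper avoids this by never running rigid coupling all the way to the boundary. Instead it introduces interior ``symmetric'' points $\ell_0(D)=\tfrac{J_0-1-D}{2}$ and $\ell_N(D)=\tfrac{N+1+J_N-D}{2}$ and stops the rigid phase the moment $\ol X$ hits one of them; the gap $\ell_N(D)-\ell_0(D)-1=\tfrac{N+J_N-J_0}{2}$ is one of the three candidates defining $L_0$, so that phase is genuinely $\le T(L_0)$. From a symmetric point one switches to \emph{reflection} coupling, and the geometry of $\ell_0,\ell_N$ forces either coalescence or a controlled redistribution within time $T((J_0-1)/2)$ or $T((N-1-J_N)/2)$, again $\le T(L_0)$. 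There is no ``rendezvous at $0$'' and no attempt to make both chains redistribute simultaneously; the deterministic nature of $\nu_0,\nu_N$ is used only to know exactly where the redistributed copy lands, so that the new distance can be computed and shown to stay even (in rigid coupling both copies step together, so a redistribution changes $D$ to $J_0+1-D$ or $N+1-D-J_N$, which are even --- parity is \emph{preserved}, not flipped as you suggest). The only stage that can repeat is the one handling $D\ge J_0$, where each pass subtracts $J_0+1\ge\rho+1$ from $D$, giving the $\lfloor N/(\rho+1)\rfloor$ factor; your ``shift by $\rho+1$ along a loop'' heuristic does not correspond to anything in the actual dynamics.
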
 

In fact, the bound in the statement is weaker than the actual result proved, as the coupling time is dominated by a sum of independent random variables all dominated by $T(L_0)$. The number of the random variables as well as their distributions depend on $x,y$. In particular, with the exception of $x,y$ as in Stage 2c in our proof of the coupling, the coupling time is dominated by the independent sum of $T(\max(\frac {J_0-1}{2},\frac{N+1-J_N}{2}))+T(\frac{N+J_N-J_0}{2})+T(\frac{J_0-3}{2})$. We will not pursue this further because our main goal is obtaining the exponential rate, the statement will become messy, and all possibilities are obtained easily from the proof and   Figure 1. We comment however, that for $x,y$ an Stage 2c, and when $L_0 = \frac{N+J_N-J_0}{2}$, the bound obtained in our construction does contain a sum of at least two independent copies of $T(L_0)$, which implies that the coupling time decays exponentially, but with a polynomial tail. We do not know whether this is an artifact of our construction or  a limitation on Markovian couplings.\\

Since by Lemma \ref{lem:couple},  $T(L_0)$ has an exponential tail, and a finite sum of IID random variables with an exponential tail also has an exponential tail with the same exponent, it follows from the theorem, Proposition \ref{pr:triangle} and Lemma \ref{lem:couple}-(2) that 

\begin{corollary}
\be \limsup_{t\to\infty}  d_t \le \ln \lambda (L_0).\ee   
\end{corollary}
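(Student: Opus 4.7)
The plan is to combine three ingredients already in place: Theorem \ref{th:upper} for stochastic domination of the coupling time, Proposition \ref{pr:triangle} to pass from the restricted $\tilde d_t$ back to the full $d_t$, and Lemma \ref{lem:couple}-(2) for the exponential tail of $T(L_0)$. (Note that the statement is naturally read as $\limsup_{t\to\infty} \frac{1}{t}\ln d_t \le \ln \lambda(L_0)$, since $d_t \in [0,1]$ and the right-hand side is negative.)

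First I would fix $x,y\in {\cal S}_N$ with $y-x$ even and $0<y-x\le \rho$, and apply Theorem \ref{th:upper} to obtain a Markovian coupling $(X,Y)$ with $(X_0,Y_0)=(x,y)$ whose coupling time $\tau$ is dominated by $S_K := T_1+\dots+T_K$, where $T_1,\dots,T_K$ are independent copies of $T(L_0)$ and $K=\lfloor 6+N/(\rho+1)\rfloor$. The standard coupling inequality $d_t(x,y)\le P_{x,y}(\tau>t)$ together with this domination and taking the supremum over admissible $(x,y)$ yields
$$\tilde d_t \le P(S_K>t).$$

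Next I would estimate the tail of $S_K$. Lemma \ref{lem:couple}-(2) gives
$$P(T(L_0)>t) = c\,\lambda(L_0)^t + O(\lambda_2(L_0)^t)\qquad \text{with } |\lambda_2(L_0)|<\lambda(L_0),$$
so in particular $P(T(L_0)>t)\le C\lambda(L_0)^t$ for some finite $C$ and all $t$. A routine argument (conditioning on $T_1=j_1,\dots,T_K=j_K$ and counting the compositions of an integer $>t$ into $K$ nonnegative parts, or equivalently a generating-function/moment-generating-function computation at an exponent just below $-\ln\lambda(L_0)$) then gives a bound of the form
$$P(S_K>t)\le C'\, t^{\,K-1}\,\lambda(L_0)^t.$$
The polynomial prefactor is irrelevant on the logarithmic scale: $\frac{1}{t}\ln(C' t^{K-1}\lambda(L_0)^t)\to \ln \lambda(L_0)$.

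Finally, Proposition \ref{pr:triangle} gives $d_t\le \lfloor 1+N/\rho\rfloor(\tilde d_t+\tilde d_{t-1})$, so the constant in front and the shift by one time step vanish under $\frac{1}{t}\ln(\cdot)$. Therefore
$$\limsup_{t\to\infty} \frac{1}{t}\ln d_t \ \le\ \limsup_{t\to\infty}\frac{1}{t}\ln \tilde d_t \ \le\ \limsup_{t\to\infty}\frac{1}{t}\ln P(S_K>t) \ =\ \ln \lambda(L_0),$$
which is the claim. There is no real obstacle here; the only mildly delicate point is confirming that the sum of finitely many i.i.d.\ variables with geometrically decaying tails still decays geometrically at the same rate (up to a polynomial factor), and this is precisely why the paper's definition of efficient coupling is phrased at the logarithmic scale in the first place, as explicitly noted in the introduction.
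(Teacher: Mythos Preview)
Your proof is correct and follows exactly the route the paper sketches just before stating the corollary: combine Theorem \ref{th:upper}, Proposition \ref{pr:triangle}, and Lemma \ref{lem:couple}-(2), using that a finite sum of i.i.d.\ variables with the same exponential tail rate retains that rate (up to a polynomial factor that disappears on the logarithmic scale). Your write-up in fact spells out more detail than the paper does, including the correct reading of the statement as $\limsup_{t\to\infty}\frac{1}{t}\ln d_t\le\ln\lambda(L_0)$.
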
 

To show that the coupling constructed in Theorem \ref{th:upper} is efficient, we need a matching lower bound. Here it is: 

\begin{prop}
\label{pr:lower}
\be d_t \ge \frac 12 (1- \frac{2\pi}{N} )\lambda(L_0)^t.\ee 
\end{prop}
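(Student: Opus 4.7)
The plan is to use the standard test-function lower bound: for any $f:\mathcal S_N\to[-1,1]$ and $x,y\in \mathcal S_N$,
$$d_t\;\ge\;d_t(x,y)\;\ge\;\tfrac{1}{2}\bigl|P_x[f(X_t)]-P_y[f(X_t)]\bigr|,$$
so the task is to exhibit $f,x,y$ for which this difference is at least $(1-2\pi/N)\lambda(L_0)^t$. The natural candidate is the Dirichlet principal eigenfunction of the lazy random walk on a length-$L_0$ interval sitting inside $\{1,\dots,N-1\}$.

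Since $L_0\le N-3$ (bound (ii) above), we may place an interval $I=\{a+1,\dots,a+L_0\}\subset\{1,\dots,N-1\}$, with its position chosen according to which of the three expressions $J_0-1$, $N-1-J_N$, $N+J_N-J_0$ realizes the maximum defining $L_0$, so that $I$ lies across the actual bottleneck. Since $I$ does not meet $\{0,N\}$, the chain $X$ agrees with the lazy random walk on $I$ up to the exit time $\tau_I$, and
$$\phi(z)=\sin\!\Bigl(\tfrac{\pi(z-a)}{L_0+1}\Bigr)\,\ch_{\{z\in I\}}$$
is an eigenfunction of the killed kernel $\hat P$: $\hat P^t\phi=\lambda(L_0)^t\phi$. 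Because $\phi\ge 0$,
$$P_x[\phi(X_t)]\;\ge\; E_x[\phi(X_t)\ch_{\tau_I>t}]\;=\;\lambda(L_0)^t\,\phi(x)\qquad(x\in I).$$
Taking $x$ at the midpoint of $I$ gives $\phi(x)=\sin(\pi\lfloor (L_0+1)/2\rfloor/(L_0+1))\ge\cos(\pi/(2(L_0+1)))$, which under the bounds on $L_0$ is at least $1-2\pi/N$ up to the bookkeeping done below.

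For the matching upper bound on $P_y[\phi(X_t)]$, my plan is to introduce a signed test function $f=\phi-\tilde\phi$, where $\tilde\phi$ is the sine eigenfunction on a disjoint translate $\tilde I$ of $I$ placed across the bottleneck, and to take $y$ at the midpoint of $\tilde I$. The diagonal contributions from $\phi$ on $x$ and $\tilde\phi$ on $y$ give a main term $\lambda(L_0)^t(\phi(x)+\tilde\phi(y))$, and the constant $\frac12(1-2\pi/N)$ will emerge from the explicit value of $\phi$ at the midpoint together with the $\frac12$ from the test-function inequality.

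\textbf{Main obstacle.} The principal difficulty lies in controlling the cross terms $P_x[\tilde\phi(X_t)]$ and $P_y[\phi(X_t)]$, i.e.\ the contribution of excursions of the chain that exit one length-$L_0$ interval and subsequently enter the other, possibly through the redistribution mechanism at $\{0,N\}$. The argument must leverage the fact that $L_0$ is the \emph{maximum} of the three candidate length scales: any alternative route between $I$ and $\tilde I$, whether direct or via $J_0/J_N$, must traverse an interval of length at least $L_0$, so the mass transfer across the bottleneck decays at rate no faster than $\lambda(L_0)^t$ with a coefficient strictly smaller than that of the diagonal terms. Making this rigorous via the strong Markov property applied at $\tau_I$ and $\tau_{\tilde I}$ is the step I would spend the most care on, and the quantitative loss incurred there is what produces the $(1-2\pi/N)$ correction rather than a cleaner constant.
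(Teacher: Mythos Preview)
Your approach has a genuine gap, and the paper avoids it by a cleaner route.

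You try to build a test function from the Dirichlet eigenfunction $\phi$ on a length-$L_0$ subinterval $I\subset\{1,\dots,N-1\}$. This only gives you a one-sided sub-eigenfunction inequality $P_x[\phi(X_t)]\ge\lambda(L_0)^t\phi(x)$, so you are forced to control the cross terms $P_x[\tilde\phi(X_t)]$ and $P_y[\phi(X_t)]$ coming from excursions that leave one interval and re-enter the other, possibly through redistribution. Your sketch for these cross terms (``any route between $I$ and $\tilde I$ must traverse an interval of length at least $L_0$, so the transfer decays at rate $\lambda(L_0)^t$ with a strictly smaller coefficient'') is essentially a restatement of the upper bound on the mixing rate---precisely the hard direction handled by the coupling in Theorem~\ref{th:upper}. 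Without that input you have no reason to expect the cross terms not to swamp the diagonal ones, and even with it you would be proving the lower bound by appealing to the matching upper bound, which is circular for the purpose of establishing the rate.

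The paper sidesteps all of this by producing a \emph{genuine} eigenfunction of the full transition kernel $p$, not of a killed kernel. The observation is that $f(x)=\sin(\rho x+\omega)$ already satisfies $pf=\tfrac12(\cos\rho+1)f$ at interior points; at the endpoints one needs $f(-1)=f(J_0)$ and $f(N+1)=f(J_N)$, and these two constraints can be met by suitable choices of $\rho$ and the phase $\omega$. Three natural solutions appear, with $\rho=\frac{2\pi}{J_0+1}$, $\rho=\frac{2\pi}{N+1-J_N}$, and $\rho=\frac{2\pi}{N+2+J_N-J_0}$, corresponding exactly to the three quantities in the definition of $L_0$. Taking the one with the largest eigenvalue gives $\lambda=\lambda(L_0)$, and since $f$ is now an honest eigenfunction, $E_xf(X_t)-E_yf(X_t)=(f(x)-f(y))\lambda(L_0)^t$ exactly, with no cross terms. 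The constant $1-2\pi/N$ then comes from choosing $x,y$ so that $f(x)-f(y)$ is large. The idea you are missing is that the free phase $\omega$ can absorb the boundary/redistribution conditions.
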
 

\subsection{Random Redistribution} 
Here we will consider more general redistribution measures, but under the additional assumption that $\nu_0=\nu_N$.\\

As seen in the last section, if $\nu_0=\nu_N=\delta_{J_0}$,  then $\frac 1t \ln d_t \sim \ln \lambda (\frac{N}{2})$ as $t\to\infty$, independently of the choice of $J_0$. In this section we show that this remains  the same under the present, more general, assumptions. The analogous results for Brownian motion instead of lazy random walk  were first obtained by  Li and his coauthors  \cite{LLR} through Fourier analysis, and,  a probabilistic proof using coupling was given  \cite{KW}. The coupling we present here is an adaptation of the coupling idea from the latter work, and we present it here because we want to distinguish it from the case of the previous section. 
\begin{theorem} 
\label{th:sym_upper}
Suppose that $0\le x <y\le N$ and $d= y-x \in \{2,\dots,\rho \}$. Then there exists a coupling with $(X_0,Y_0)=(x,y)$ such that the coupling time $\tau$ is  dominated by at most $5$  independent copies of $T(N/2)$. 
\end{theorem}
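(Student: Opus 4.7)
My approach adapts the Kolb-W\"ubker reflection-coupling strategy to this discrete setting, leveraging the hypothesis $\nu_0 = \nu_N$ in the following way: whenever $X$ and $Y$ simultaneously attempt a redistribution step (possibly from opposite endpoints), we couple their redistributed locations to coincide, since the two redistribution measures agree, and this achieves immediate coupling.

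I propose a two-stage Markovian coupling.  In Stage 1, I use \emph{synchronous coupling}, where $X$ and $Y$ share the same lazy random walk increment, so the distance $Y_n-X_n=d$ is preserved while the common displacement $\Delta_n := X_n-x$ performs a lazy random walk on $\Z$.  The stage stops the first time $\Delta_n$ hits the target $\Delta^\star := (N-x-y)/2$, which places the pair in the \emph{symmetric} configuration $X_n+Y_n=N$; if $\Delta_n$ first reaches a value triggering a boundary hit of $X$ or $Y$, I absorb the resulting redistribution and restart Stage 1 from the new state.  In Stage 2, starting from the symmetric pair $\bigl((N-d)/2,(N+d)/2\bigr)$, I switch to \emph{mirror coupling}: the increments of $X$ and $Y$ are negatives of each other.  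This preserves $X_n+Y_n=N$ while the difference $D_n := Y_n-X_n$ evolves as a lazy random walk on $\{0,2,\ldots,N\}$ with $\pm 2$ increments.  The stage terminates either at $D_n=0$ (coupling achieved directly) or at $D_n=N$, i.e., $(X_n,Y_n)=(0,N)$; in the latter case the next mirror step, with probability $1/4$, causes a simultaneous redistribution of both copies, which we couple to a common landing point using $\nu_0=\nu_N$.

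For the time estimates, Stage 1 is a gambler's-ruin-type problem for $\Delta_n$ on an interval of length at most $(N-d)/2 \le N/2-1$, hence dominated by $T(N/2)$ via Lemma \ref{lem:couple}.  In Stage 2, the rescaled process $D_n/2$ is a lazy random walk on $\{0,\ldots,N/2\}$ absorbed at the endpoints, whose hitting time is again dominated by $T(N/2)$; the final geometric waiting at $(0,N)$ for the simultaneous-redistribution event is likewise dominated by $T(N/2)$.  Allowing at most three restarts of Stage 1 (after each restart the new distance $d'$ still lies in $\{2,4,\ldots,\rho\}$, by the parity and support assumptions on $\nu_0,\nu_N$) yields $\tau$ stochastically dominated by a sum of at most five independent copies of $T(N/2)$.

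The main technical obstacle is the restart bookkeeping in Stage 1.  A boundary-triggered restart leaves the pair at a random configuration whose distance depends on the realization of the redistribution, so the restarted Stage 1 has a different target $\Delta^\star$.  To absorb the restarts into the constant factor $5$, I would verify that each cycle succeeds (reaches symmetry) with probability bounded below by an absolute constant, so the number of cycles is controlled by a random variable with exponential moments, and that the cycle durations are conditionally independent and each dominated by $T(N/2)$.  Carrying this bookkeeping carefully is the delicate step that yields the precise constant $5$ in the statement.
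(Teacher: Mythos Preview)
Your Stage 2 is essentially the paper's final stage and is fine: mirror coupling from the symmetric configuration $X+Y=N$, with simultaneous redistribution at $(0,N)$ coupled to a common point via $\nu_0=\nu_N$, runs in time dominated by one copy of $T(N/2)$.

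The gap is entirely in Stage 1.  Two specific problems:

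\textbf{(i) The distance does not stay below $\rho$ after a restart.}  Under your rigid coupling, when $X$ is redistributed from $0$ to $K\sim\nu_0$ the other copy is at $d-1$, so the new gap is $|K-(d-1)|$.  The support assumption only gives $K\in\{3,5,\dots,N-3\}$; nothing prevents $K$ from being close to $N$, making the new gap of order $N$, not $\le\rho$.  So your parenthetical justification ``by the parity and support assumptions'' establishes parity but not the size bound, and the restarted Stage 1 is no longer in the regime you assumed.

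\textbf{(ii) There is no deterministic bound on the number of restarts.}  You first assert ``at most three restarts'' and then, in the last paragraph, retreat to a success-probability argument giving a geometric number of cycles.  These are incompatible: a uniform lower bound on the success probability yields exponential-tail control on the number of cycles, but not the statement that $\tau$ is dominated by a \emph{fixed} sum of five independent copies of $T(N/2)$.  With a Markovian restart scheme you will not get this.

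What the paper does instead is precisely non-Markovian, and this is the missing idea.  After the \emph{first} redistribution sends one copy to a random point $K$, the coupling is steered (via a short reflection phase followed by rigid motion) so that the \emph{second} copy reaches $0$ with the first copy sitting at $K+1$; at that moment the second copy is redistributed, and its landing point is \emph{forced to equal the same $K$}.  Since $\nu_0=\nu_N$ this is a legitimate marginal for the second copy, and the pair coalesces.  This trick caps the total number of stages at four (five in the asymmetric case $x\le N/2<y$, handled by the reflection $\tilde y=N-y$ plus one extra mirror stage), each dominated by $T(N/2)$.  A purely Markovian coupling cannot reuse $K$ in this way, which is why your restart bookkeeping does not close.
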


The matching lower bound is given by 
\begin{prop}
\label{pr:sym_lower}
$d_t \ge P(T(N/2)>t)$.  
\end{prop}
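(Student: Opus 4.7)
The plan is to lower bound $d_t$ by exhibiting a pair of starting points together with a distinguishing set. The natural candidates are $x = \lfloor (N/2+1)/2 \rfloor = N/4$ (using $N \in 4\mathbb{N}$) and $y = N-x = 3N/4$, with distinguishing set $A = L := \{1, \ldots, N/2\}$, so that ultimately $d_t \ge d_t(x,y) \ge P_x(X_t \in L) - P_y(X_t \in L)$.

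First I would establish the identity
\[
P_x\bigl(X_s \in L \text{ for all } s \le t\bigr) \;=\; P\bigl(T(N/2) > t\bigr),
\]
which holds because on this event the chain never visits $\{0,N\}$, and so its path is distributed as that of the lazy random walk absorbed upon exit from $\{1,\ldots,N/2\}$; by definition of $T(N/2)$ (starting from $\lfloor (N/2+1)/2\rfloor = N/4$), this event has probability $P(T(N/2)>t)$. Consequently $P_x(X_t \in L) \ge P(T(N/2)>t)$.

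Next I would verify the symmetric estimate $P_y(X_t \in R) \ge P(T(N/2)>t)$, with $R := \{N/2+1,\ldots,N\}$. Here a short coupling with the lazy random walk on $\mathbb{Z}$ starting from $3N/4$ is used to handle the chain's non-lazy behavior at the endpoint $N$: at each visit to $N$, our chain can remain in $R$ via redistribution to an $R$-point, whereas the pure lazy walk's right step $N \to N+1$ always exits $R$. Thus $X$ stochastically dominates the lazy walk on ``staying in $R$,'' so $P_y(X_t \in L) \le 1 - P(T(N/2)>t)$.

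The hard part will be upgrading these two ingredients into the sharp bound $d_t(x,y) \ge P(T(N/2)>t)$ rather than the easy consequence $2P(T(N/2)>t) - 1$ they give directly. I expect this step to exploit the hypothesis $\nu_0 = \nu_N$ in an essential way: by decomposing each of $P_x(X_t \in L)$ and $P_y(X_t \in L)$ via the strong Markov property at the first exit time of $X$ from $L$ (resp. $Y$ from $R$), one constructs a Markovian coupling in the spirit of Kolb--W\"ubker, identifying $X$ and $Y$ after each has first been redistributed at a boundary (which is only possible because the two boundary measures coincide). Under this coupling, the ``post-exit'' contributions to the two probabilities cancel in the subtraction, leaving precisely the ``trapped-path'' contribution $P(T(N/2)>t)$. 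The main technical obstacle is that the exit distributions are not literally symmetric --- $X_{\sigma_L} \in \{0,N/2+1\}$ while $Y_{\sigma_R} \in \{N/2\} \cup (\mathrm{supp}(\nu)\cap L)$ --- so making the cancellation precise requires careful bookkeeping of the exit locations and their subsequent hitting probabilities, and this is where the bulk of the work will lie.
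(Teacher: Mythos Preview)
Your setup is right --- same starting points $N/4,\,3N/4$ and same distinguishing function $f=\ch_{\{\,\cdot\,\le N/2\}}$ as the paper --- but you have misidentified where the difficulty lies. The ``hard part'' you anticipate (delicate strong-Markov bookkeeping to cancel post-exit contributions) simply does not arise in the paper's argument.

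The paper's proof is essentially one line: run the \emph{reflection} coupling from $(N/4,3N/4)$, so that the increments of $X$ and $Y$ are opposite. Then $X_t+Y_t=N$ for all $t$ up to the coupling time $\tau$, and hence $X_t\le N/2-1$ and $Y_t\ge N/2+1$ on $\{\tau>t\}$. Consequently $f(X_t)-f(Y_t)=\ch_{\{\tau>t\}}$ deterministically, and
\[
d_t(N/4,3N/4)\;\ge\;E_{(N/4,3N/4)}\bigl[f(X_t)-f(Y_t)\bigr]\;=\;P_{(N/4,3N/4)}(\tau>t).
\]
The coupling time $\tau$ is exactly the first exit of the lazy walk $X$ from $\{0,\dots,N/2-1\}$: either $X$ reaches $N/2$ and meets $Y$, or $X$ steps from $0$ to $-1$ while $Y$ simultaneously steps from $N$ to $N+1$, and both are redistributed to the \emph{same} point (here is where $\nu_0=\nu_N$ is used). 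Thus $\tau$ has the law of $T(N/2)$, and the bound follows.

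Your separate estimates $P_x(X_t\in L)\ge P(T(N/2)>t)$ and $P_y(X_t\in L)\le 1-P(T(N/2)>t)$ throw away the correlation that the reflection coupling preserves; this is why you end up with $2P(T(N/2)>t)-1$ and then have to work to recover the loss. Your proposed recovery --- decompose at the first exit and couple the post-exit pieces --- might be pushed through, but it is genuinely harder than necessary because, as you yourself note, the two exit mechanisms are asymmetric ($X$ exits $L$ at $0$ or $N/2{+}1$, whereas $Y$ exits $R$ at $N/2$ or via redistribution). The reflection coupling avoids all of this: the symmetry $Y_t=N-X_t$ is maintained exactly until $\tau$, so no post-exit cancellation is ever needed.
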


We highlight the following with regard to Theorem \ref{th:sym_upper}: 

\begin{enumerate} 
\item 

As shown in our proof of Theorem \ref{th:sym_upper},  the coupling is not Markovian, unless $\nu_0$ is a point mass distribution,  and this raises the question whether there does exist a Markovian coupling at all, and what is the best bound a Markovian coupling can give. The same questions are even more interesting for the case $\nu_0\ne \nu_N$ with none being a point-mass distribution. We leave these for future research. 
\item Unlike the coupling of Theorem \ref{th:upper}, the coupling in Theorem \ref{th:sym_upper} ends after at most 5 stages, independently of the parameters.
\end{enumerate}

\section{Proofs} 
\subsection{Coupling Regimes} 
\label{sec:coupling}
To prove the theorem, we begin by introducing the couplings we will apply. The main idea is to switch between two coupling regimes, according to the state of the system. It is convenient and simpler to describe the coupling using simple symmetric lazy random walk on $\{-1,0,\dots,N,N+1\}$, with transition of the random walk from $0$ to $-1$ identified with redistribution to $J_0$, and transition from $N$ to $N+1$ identified with  redistribution to $J_N$. Here are the two coupling regimes to be employed:
\begin{enumerate}
\item {\it Rigid} coupling.  The increments of the two copies are identical. 
\item {\it Reflection (Ref)} coupling. The increments of the two random walks are opposite. 
\end{enumerate} 


Switching between the two regimes occurs at hitting times of the joint process. 

\subsection{Deterministic Redistribution}
\begin{proof}[Proof of Theorem \ref{th:upper} ]
Without loss of generality we assume  that $J_0\le N-J_N$. 

In order to simplify the description of the coupling, we let $\ol{X}_t= \min (X_t,Y_t)$ and $\ol{Y}_t = \max (X_t,Y_t)$. \\ 

Suppose that $\ol{X}_0 =x$ and $\ol{Y}_t=y$, and let $D=y-x$. Then $D$ is even and $D< J_0$.  Define the ``symmetric" points: 
  
\begin{equation} 
\label{eq:symmetric} 
 \ell_0(D) =  \frac{ J_0 -1 - D}{2}\mbox{ and } \ell_N(D) =  \frac{ N+1+J_N-D}{2}.
 \end{equation} 

The coupling is done in four stages. We begin from the stage that corresponds to the initial condition of the system. In the description below, the first item describes the initial configuration. \\

\noindent {\bf Stage 1.} 
\begin{enumerate} 
\item $\ol{X}_0 \in \{ \ell_0(D),\ell_N(D)\}$. Apply Ref coupling. 
\item Stop when coupling occurs. 
\item Time to complete: $T(\frac{J_0 -1}{2})$ if $\ol{X}_t = \ell_0(D)$ and $T(\frac{N - J_N-1}{2})$ if $\ol{X}_t = \ell_N(D)$. 
\end{enumerate} 

When $\ol{X}$ does not begin from either of the symmetric points, we will drive it to one of them. This will be done through rigid coupling. If $\ol{X}$ is between the symmetric points we apply rigid coupling (Stage 2a). When $\ol{X}_t < \ell_0(D)$ or $\ol{X}_t > \ell_N(D)$, there  may be redistribution before $\ol{X}$  reaches  either point (Stages 2b and 2c).  

\noindent{\bf Stage 2a.} 
\begin{enumerate} 
\item $\ol{X}_0 \in \{\ell_0(D)+1,\dots, \ell_N (D)-1\}$ and $D < J_0$. Apply rigid coupling. 
\item Stop when $\ol{X}$ hits $\{\ell_0(D),\ell_N(D)\}$.  
\item Time to complete: $T(\ell_N(D) - \ell_0(D)-1)=T( \frac{N+ J_N -J_0}{2})$. 
\end{enumerate} 

When stage ends we continue to stage 1. 

\noindent {\bf Stage 2b.} 
\begin{enumerate} 
\item $\ol{X}_0 < \ell_0(D)$ and $D < J_0$. Apply rigid coupling. 
\item Stop when either
\begin{enumerate} 
\item $\ol{X}$ hits $\ell_0(D)$; or
\item $\ol{X}$ is redistributed from $0$.
\end{enumerate} 
\item Time to complete: $T(\ell_0(D) )=T( \frac{J_0-1-D}{2})$. 
\end{enumerate} 
If the first alternative holds, we continue to Stage 1. Otherwise, at the end of the stage, $\ol{Y}$ is at $J_0$ while $\ol{X}$ is at $D-1$. Thus, the new distance is $D' = J_0 +1 - D$, which is even because $J_0+1$ and  $D$ are  even, and $D'<J_0$ because $D$ is an even positive integer. Observe then that 
$$\ell_0(D') =  \frac{ J_0-1-D'}{2} = \frac{ J_0-1 -(J_0+1-D)}{2}=\frac{D}{2}-1<D-1.$$
 Therefore the symmetric point is below the position of $\ol{X}$, and we continue to stage 2a, with the new distance $D'$. 

\noindent{\bf Stage 2c.} 
\begin{enumerate} 
\item $\ol{X}_0 > \ell_N (D)$ and $D < J_0$. Apply rigid coupling. 
\item Stop when either
\begin{enumerate} 
\item $\ol{X}$ hits $\ell_N(D)$; or 
\item $\ol{Y}$ is redistributed from $N$. 
\end{enumerate} 
\item Time to complete: $T(N - \ell_N(D)-D)=T( \frac{N-J_N-1-D}{2})$. 
\end{enumerate} 
If the first alternative holds, we continue to Stage 1. Otherwise, at the end of the stage, $\ol{X}$ is at $J_N$ while $\ol{Y}$ is at $N+1 - D$. Thus, the new distance is $D' =N+1-D -J_N$. If $D'\ge J_0$, we continue to Stage 3. Otherwise, it is clear that $\ol{X} \le \ell_N(D')$, so we  continue to Stage 2a, with the new distance $D'$. 

\noindent{\bf Stage 3.} 
\begin{enumerate} 
\item $\ol{X}_0 \in \{J_0,J_N\}$, $\ol{X}_0 \leq{\ell_N(D)}$, and $\ol{Y}_0 = \ol{X}_t+D$ with $D\ge 2$ even and $D \ge J_0$. Apply rigid coupling.
\item Stop when either 
\begin{enumerate} 
\item $\ol{X}$ hits $\ell_N (D)$; or 
\item  $\ol{X}$ is redistributed from $0$. 
\end{enumerate} 
\item Time to complete: $T(\ell_N(D) )$.  Since $D\ge J_0$, $J_0$ is odd and $D$ is even, it follows from \eqref{eq:symmetric} that the time is dominated by $T( \frac{N+J_N - J_0}{2})$. 
\end{enumerate} 
If the first alternative holds, we continue to Stage 1. In this case, we adapt Stage 1 slightly, as we now have $D' > J_0$. However, since $\ol{X}_t = \ell_N(D')$ and $\ol{X}_t < \ol{Y}_t$ by definition, $D' < N+1 - J_N$, and so Stage 1 works as before.

Otherwise, by assumption, after redistribution we have that $\ol{X} = J_0$ and $\ol{Y} = D-1\ge J_0$, so the new distance $D' = D- (1+J_0)$. Let us consider three  alternatives: 
\begin{enumerate} 
\item $\ol{X} = J_0> \ell_N(D')$. This can only occur if 2c started from $J_N$ (and only if $J_0>J_N$). In this case, we continue to Stage 4.
\item $\ol{X} = J_0 = \ell_N (D')$. Then we continue to Stage 1, as from alternative 2a. 
\item $\ol{X}=J_0 < \ell_N (D')$. If $D'<J_0$, we continue to stage 2a. Otherwise, we iterate stage 3. Since in each iteration the distance decreases by $1+J_0$, the number of iterations does not exceed  $\lfloor D/ (1+J_0)\rfloor$, and we continue Stage 1 or to Stage 2a. Note that since the distance decreases after each iteration, $\ell_N$ increases after each iteration. 
\end{enumerate}
 
\noindent{\bf Stage 4}
\begin{enumerate}
\item $\ol{X}_0= J_0 > \ell_N(D)$ and $\ol{Y}_0 = J_0 + D$. Apply Ref coupling. 
\item Stop when either
\begin{enumerate}
\item Coupling occurs, or
\item $\ol{Y}$ is redistributed from $N$. 
\end{enumerate}
\item Time to complete: $ T( N- J_0 - D/2) $ but since $J_0 > \ell_N (D)$, it follows from \eqref{eq:symmetric}  that $N-J_0-D/2 < \frac{N-1-J_N}{2}$, so time dominated by $T(\frac{N-1-J_N}{2})$.  
\end{enumerate}
If the second option occurs, $\ol{X} = J_N$ and $\ol{Y} = J_0 - (N+1-(J_0 + D))$. Since $ J_N < \ol{Y} < J_0$, $D' = \ol{Y} - \ol{X} < J_0$. Thus, depending on the relation of $\ol{X} = J_N$ to $\ell_0(D')$, we continue to either Stage 1, 2a, or 2b. Furthermore, since, initially, $\ol{X} > \ell_N(D)$, we have that $\ol{X} - J_N > N+1 - \ol{Y}$ and in particular, $\ol{X} > N+1-\ol{Y}$. Thus, we disregard the possibility of $\ol{X}$ redistributing from 0 as $\ol{Y}$ would always be redistributed first. \\

\setcounter{figure}{0}
\begin{figure}[h!]
\includegraphics[scale=0.8]{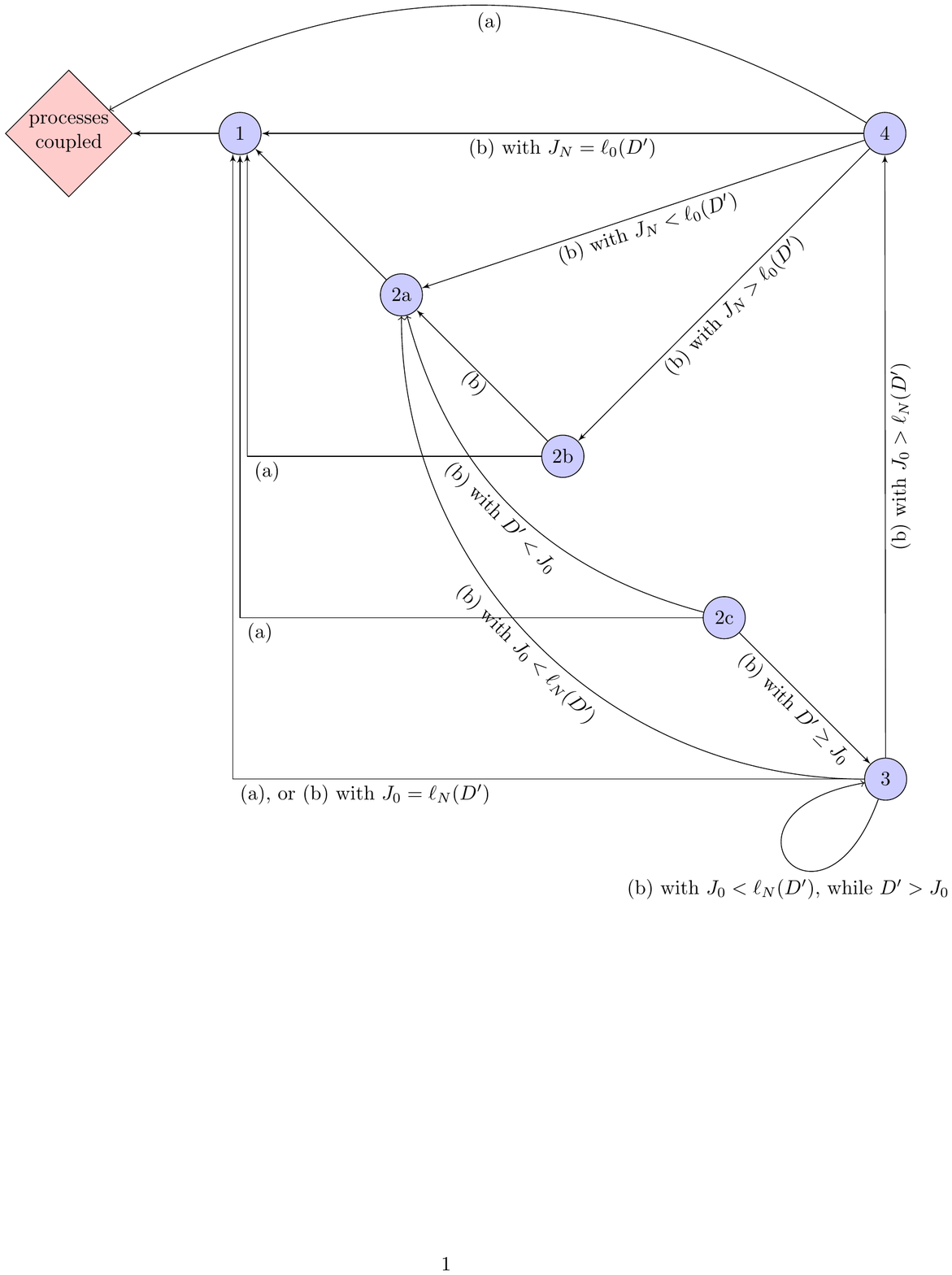}
\caption{Summary of the coupling from Section \ref{sec:coupling}} 
\label{fig:stages}
\end{figure} 

Let us review the coupling. Figure \ref{fig:stages} displays all possible implementations of the coupling.  Stages 1 and 2a,2b,2c are the initial steps in the sense that the coupling must begin from one of them.  In the coupling, each of these steps is repeated at most once. Stage 2c is special in the sense that it takes care of the case that the redistribution may lead to a distance bigger or equal to $J_0$. Stage 3 is invoked when this happens. \\

Stage 3 may be repeated a number of times, bounded above by $\lfloor 1+N/(J_0+1)\rfloor$. From Stage 3, the coupling proceeds to either stage 1,2a or 4. From Stage 4, the coupling can either end (meeting) or continue to one of the Stages 1,2a,2b.\\

From the point of view of duration of the coupling, the meeting time is bounded above by the independent sum of times for all Stages, with possible repetitions for Stage 3. Listing the times for each of the stages in order of appearance, we have $T((J_0-1)/2)$ or $T( (N-J_N-1)/2)$ (Stage 1), and 
$T(\frac{N+J_N-J_0}{2})$, $T(\frac{J_0-1-D}{2})$ and $T(\frac{N-J_N-1-D}{2})$ (Stage 2), $T(\frac{N+J_N-J_0}{2})$ (Stage 3) and $T(\frac{ N-J_N-1}{2})$ (Stage 4). The maximal length among all intervals mentioned above is therefore $\frac 12 \max \{ J_0-1,N-J_N-1, N+J_N-J_0\}$, which is $L_0$.   Finally, Figure \ref{fig:stages} shows that the maximal number of steps (omitting Step 3) is $5$, and the result now follows. 
\end{proof} 

\begin{proof} [Proof of Proposition \ref{pr:lower}]
We will find an eigenfunction $f$  for $p$ which is not constant, and has a  real eigenvalue $\lambda$. Without loss of generality, we may assume that $\sup |f| \le 1$. For any $x,y$,   $d_t (x,y) \ge \frac 12 \left ( E_x f (X_t) - E_y f (X_t) \right)$.  Since we must have $\sum_x f(x) \pi(x) =0$, where $\pi$ is the stationary distribution of $X$,  it follows that $f$ attains both strictly positive and strictly negative values. In particular, we can choose $x$ and $y$ such that $f(x) > 0 > f(y)$, and it immediately follows that 
\begin{equation} 
\label{eq:lower} 
d_t (x,y) \ge \frac 12 ( f(x) - f(y)) \lambda^t.
\end{equation}
 In order to find $f$, we will choose $f(x) =\sin (\rho x + \omega)$, and will find choices for the parameters $\rho$ and $\omega$ that match the upper bounds from Theorem \ref{th:upper}.  Suppose now that $x \in \{1,\dots,N-1\}$. Then 
$$ p f(x) =  \frac 14 \left  ( \sin (\rho (x+1) + \omega) + \sin (\rho (x-1) + \omega) \right) + \frac 12 \sin (\rho x + \omega).$$ 
Thus, $p f (x) = \frac 12 (\cos \rho +1) f(x)$. Now if we extend $f$ to $\{-1,\dots, N+1\}$ and additionally impose the constraints 
 \be 
 \label{eq:ev_eqn}  \begin{cases} \sin ( -\rho +\omega) = \sin (\rho J_0 + \omega) \mbox{ and }\\ \sin (\rho (N+1) + \omega) = \sin ( \rho J_N + \omega),\end{cases}
 \ee
then it immediately follows that $f$ (restricted to the state space) is indeed an eigenfunction for $p$ with eigenvalue $\lambda =\frac 12 (\cos \rho +1)$.  In order to proceed, we need to find choices for $\rho$ and $\omega$ that will satisfy the constraints. \\

The first constraint is met if $-\rho + \omega +2\pi = \rho J_0  + \omega$. That is $\rho (J_0+1) = 2\pi$, or $\rho = \frac{2\pi }{J_0+1}$. With this choice of $\rho$, the first constraint is met for all choices of $\omega$, which allows to freely chose $\omega$ to meet the second constraint. The actual value of $\omega$ is irrelevant for the eigenvalue calculation. \\

As is easy to see, we can repeat the argument by considering the second constraint first. This will give us $\rho  =\frac{2\pi}{N+1 -J_N}$. \\

Another way to satisfy the  first constraint  is to have the arguments of the $\sin$ function in the  equation symmetric with respect to a maximum or a minimum of the $\sin$ function, that is $\pi/2 + \pi k$ for some integer $k$. In taking $k=0$, the first constraint is satisfied when 
$$ \frac \pi2 -( -\rho +\omega)  = \rho J_0 + \omega - \frac \pi 2,$$
and the second constraint will be met if 
$$ \frac {3\pi}{2} - (\rho J_N + \omega) = \rho (N+1) +\omega  - \frac {3\pi}{ 2}.$$ 
Subtract the first equation from the second to obtain 
$\pi - \rho (J_N +1) = -\pi + \rho (N+1-J_0)$, that is 
$$ \rho = \frac{2\pi}{N+1 +J_N - (J_0 -  1)}.$$
Using the first equation, $\omega = \frac {\pi}{2} -\frac { \rho}{2}  (J_0-1)$, and the second equation is  satisfied too. \\

In light of the above, we see that  \eqref{eq:lower} is satisfied when $\lambda$ is chosen to be $\lambda_1$ in the statement of the theorem, and $f$ is the corresponding eigenfunction of the form $\sin (\frac{\pi}{L_0+1} x + \omega_1)$. Observe  that $\rho_1 \ge \frac{\pi}{N}$. In particular the set $\{\omega_1,\rho_1 +\omega, 2\rho_1+ \omega, \dots , N \rho_1 + \omega\}$ contains at least one element whose distance from  $\pi/2 + \pi k_1$ for some $k_1 \in \Z$,  is at most $\frac{\pi}{N}$, as well as an element whose distance  from  $\pi k_2$ for some $k_2 \in \Z$ is at most $\frac {\pi}{2}$. Call the first $x$ and the second $y$. Without loss of generality, we may assume $f(\pi/2 + k \pi)=1$ (otherwise change to $-f$). It follows that 
$f(x) \ge 1- \frac{\pi}{N}$. Similarly, $f(y)\le \frac{\pi}{N}$. Therefore, $f(x) - f(y) \ge 1- \frac{2 \pi}{N}$, and the result follows.
\end{proof}

\subsection{Random Redistribution} 
\begin{proof}[Proof of Theorem \ref{th:sym_upper}]
We first prove the theorem for the case that $x,y\le N/2$. By symmetry, this also cover the case where $x,y \ge N/2$. After we construct the coupling for this stage, we extend it to the remaining case $x\le N/2$ and $y>N/2$.\\

Assume then that $0 \le x < y \le N/2$ and that $d= y-x \le \rho$.  Let $K,K'$ be two independent random variables  distributed according to $\nu_0$.\\

\noindent{\bf Stage 1a.} 
\begin{enumerate} 
\item $X_0 = x, Y_0= y,~0\le x<y \le N/2$, $d\le \rho$. Apply Rigid coupling. 
\item Stop when either 
\begin{enumerate} 
\item $Y$ hits $N/2$, then continue to 1b; or 
\item $X$ is redistributed from $0$ to $K$. Continue to Stage 2a. 
\end{enumerate} 
\item Time is bounded above by $T (N/2)$. 
\end{enumerate} 

\noindent 
{\bf Stage 1b.}
\begin{enumerate} 
\item $Y_0= N/2$ and $X_0 =N/2-d$. Apply Ref coupling. 
\item Stop when either 
\begin{enumerate} 
\item Copies meet; or 
\item $Y_t- X_t =\rho $. Continue to 1c. 
\end{enumerate} 
\item Time is bounded above by $T(\rho/2-1)$. 
\end{enumerate} 
Note that the second alternative will occur before a redistribution, because when $X$ hits $0$, the distance will be $N/2-d + d + N/2-d = N-d$, and since $d \le \rho$ this quantity is greater or equal to $\rho$. \\

\noindent 
{\bf Stage 1c.}
 \begin{enumerate} 
\item $X_0 = N/2-d - (\rho-d)/2,~Y_0 = N/2+(\rho-d)/2$.  Apply Rigid coupling. 
\item Stop when either 
\begin{enumerate}
\item $Y_t = (N+\rho)/2$, then continue to Stage 3; or  
\item $X$ is redistributed from the origin to $K$, then continue to Stage 3. 
\end{enumerate}
\item Time is bounded above by $T (\frac{N-\rho}{2}+1)$. 
\end{enumerate} 
If the second alternative holds and $K\le N/2$, the processes meet. Otherwise after this stage ends the copies are at $a$ and $N-a$ for some $a$. \\

\noindent 
{\bf Stage 2a.} $X_0=K$ and $Y_0=d-1$ ($Y$ never jumped). Apply Ref coupling. 
\begin{enumerate} 
\item Stop when either 
\begin{enumerate} 
\item Copies meet; or 
\item When distance is $K+1$ and continue to 2b. 
\end{enumerate} 
\item Time bounded above by  $T(d/2)$. 
\end{enumerate} 

\noindent 
{\bf Stage 2b.} 
\begin{enumerate} 
\item $X_0=K+d/2,Y_0=d/2-1$ ($Y$ never jumped). Apply Rigid coupling. 
\item Stop when either:
\begin{enumerate} 
\item $Y$ is redistributed from $0$ to $K$, and copies meet; or 
\item $Y_t= N/2 - (K+1)/2$, then continue to stage 3. 
\end{enumerate} 
\item Time is bounded above by $T(N/2- (K+1)/2)$. 
\end{enumerate} 

\noindent {\bf Stage 3.} 
\begin{enumerate} 
\item $Y_0=N-X_0$. Apply Ref coupling.  
\item Stop when either 
\begin{enumerate} 
\item The copies meet; or 
\item The copies are redistributed from $0$ and $N$ simultaneously to $K'$. 
\end{enumerate} 
\item Time is dominated by $T(N/2)$. 
\end{enumerate} 

As is easy to see, the coupling ends after no more than 4 Stages, the longest chain being 1a$\to$ 1b$\to$1c $\to$  3, and the times for the stages are all dominated by IID copies of $T(N/2)$. Furthermore, this coupling is not Markovian, because the in Stage 2b, if $Y$ is redistributed from $0$, the transition is from $(0,K+1)\to (K,K)$, whereas, if the coupling were Markovian and $\nu_0(y) < 1$, then for any $y<N$, there  exists $x\ne y$ and $y'$ such that  the transition  $(0,y+1)\to (x,y')$ occurs with positive probability. \\

We have therefore completed the proof for the case where $x,y$ are both $\le N/2$. Suppose now that $x\le N/2$ and $y > N/2$. Let $\tilde y=N-2$. Then $x,\tilde y\le N/2$, and furthermore, since $0<y-x \le \rho$, and $y-x = |y-N/2| + |N/2-x|= |\tilde y - N/2| + |N/2-x|$. But $|\tilde y- x| \le \max \{N/2-\tilde y,N/2-x\}\le y-x\le \rho$. Furthermore, since $\tilde y -y$ is even, $\tilde y -x$ is even too. Thus, we can construct a coupling $(X,\tilde Y)$ starting from $(x,\tilde y)$ whose coupling time will be dominated by at most 4 independent copies of $T(N/2)$. However, letting $Y = N - \tilde Y$, at the coupling time $\tau$ for $(X,\tilde Y)$, we have that $Y_{\tau} =N -X_{\tau}$. Therefore, applying stage 3 again (with an independent copy of $K'$) guarantees that $X$ and $Y$ will be coupled after no more than $5$ independent copies of $T(N/2)$. 

\end{proof} 

\begin{proof}[Proof of Proposition \ref{pr:sym_lower}]
  Let $f(x) = 1$ if $x\le N/2$ and $f(x)=0$ otherwise. Now 
 $$ d_t \ge E_{N/4} f (X_t) - E_{3N/4} f (Y_t) =E_{(N/4,3N/4)} (f (X_t) - f (Y_t)),$$
 for every coupling $(X,Y)$. If we choose Ref coupling, until the copies meet and then the move together, and since $3N/4-N/4$ is even, it follows that the meeting time $\tau$ occurs exactly when $X$ exists the set $\{0,\dots,N/2-1\}$. In particular,  
  \be d_t \ge E_{(N/4,3N/4)}\left [ \left (  f (X_t) - f (Y_t)\right) ;\tau>t \right],\ee 
  and the righthand side is equal to  $P( T(N/2)>t)$. 
  \end{proof} 
  
\section{Appendix} 
\begin{proof}[Proof of Lemma \ref{lem:couple}] ~\\
\noindent 1. Suppose $L$ is odd so that $Z'$ starts from the unique center $\frac{L+1}{2}$ and $Z$ starts from $z\in\{1,...,L\}$. Assume WLOG that $z<\frac{L+1}{2}$. If $\frac{L+1}{2}-z$ is even, we run an Ref coupling until $Z$ exits or $Z$ and $Z'$ meet at $z+\frac{1}{2}(\frac{L+1}{2}-z)$ and continue together until exiting. In either case $T(L)\leq  T'(L)$. If $\frac{L+1}{2}-z$ is odd, the method outlined in the description of rigid coupling can be employed. Namely, we toss two fair coins, $A$ and $B$, and have $A$ determine whether $Z$ or $Z'$ moves while $B$ determines which way it moves. After this procedure, either $Z$ will have exited or $Z'-Z\geq 0$ is even in which case we can continue with the Ref coupling as before. Now suppose that $L$ is even. In this case there are two centers, $\frac{L+1}{2}\pm\frac{1}{2}$. Assume WLOG that $z<\frac{L+1}{2}-\frac{1}{2}$. If $\frac{L+1}{2}-\frac{1}{2}-z$ is odd then have $Z'$ start at $\frac{L+1}{2}+\frac{1}{2}$, otherwise have $Z'$ start at $\frac{L+1}{2}-\frac{1}{2}$. This ensures that $Z'-Z$ is even so we can run the Ref coupling as before. For a given $z$, this only establishes that $T(L)\leq T'(L_0)$ for $Z'$ starting from a particular center. Hence the exit time starting from $z$ is stochastically dominated by the exit time starting from that particular center. However, since the exit times starting from either center have the same distribution, this is of no consequence and the stochastic dominance holds for both centers.\\

\noindent 2. Observe that the sub-Markovian transition function for $Z$ killed outside $\{1,\dots,L\}$ is symmetric and irreducible on $\{1,\dots,L\}$. From Perron-Frobenius, the largest eigenvalue is simple. The corresponding eigenvector, the Perron eigenvalue  is (WLOG) strictly positive on $\{1,\dots,L\}$.  By symmetry there exists an orthonormal basis with respect  to the counting measure consisting of eigenvectors, including the normalized Perron eigenvector as an element. It follows that the Perron root is the unique element in the basis which does not change signs. Finally, note that the function $\sin ( \frac{\pi}{L+1}x)$ on $\{1,\dots,L\}$ and zero elsewhere is an eigenfunction for this transition function, strictly positive on $\{1,\dots,L\}$. Thus this a Perron eigenvector. In addition, as is easy to see, the eigenvalue is $\lambda(L)=\frac 12 (\cos  \frac{\pi}{L+1} +1)$. A straightforward computation shows that the $\ell^2$ normalized Perron eigenfunction is $ \sqrt{ \frac 2 {L+1}}\sin ( \frac{\pi}{L+1}x)$, and that $\sum_{x=1}^{x=L}\sin ( \frac{\pi}{L+1} x) = \cot (\frac{\pi}{2(L+1)})$. The result now follows from the spectral theorem. 
\end{proof}


\bibliographystyle{amsalpha}
\bibliography{rwcoupling}

\end{document}